\documentclass[12pt]{amsart}

\usepackage[english]{babel}
\usepackage{amssymb} 
\usepackage{amsfonts} 
\usepackage{amsmath}
\usepackage{amsthm} 
\usepackage{epsfig} 
\usepackage{color}
\usepackage{pinlabel}
\usepackage{amscd}
\usepackage[all]{xy}
\usepackage{hyperref}
\usepackage{subcaption}
\usepackage{graphicx}


\newtheorem{lemma}{Lemma}[section]
\newtheorem{theorem}[lemma]{Theorem}
\newtheorem{prop}[lemma]{Proposition}
\newtheorem{cor}[lemma]{Corollary}

\theoremstyle{definition}
\newtheorem{defn}[lemma]{Definition}

\theoremstyle{remark}
\newtheorem{rem}[lemma]{Remark} 


\newcommand{\Iso}{{\mathrm{Isom}}}

\newcommand{\cro} {\ensuremath {{\mathrm{cr}}}}

\newcommand{\matR} {\ensuremath {\mathbb{R}}}

\newcommand{\matC} {\ensuremath {\mathbb{C}}}

\newcommand{\calT} {\ensuremath {\mathcal{T}}}

\newcommand\evol{\operatorname{\varepsilon-vol}}
\newcommand\bvol{\operatorname{\partial-vol}}



\usepackage[letterpaper,left=1in,right=1in,top=1in,bottom=0.9in]{geometry}
\vbadness 10000
\hbadness 10000




\let\oldbibliography\thebibliography
\renewcommand{\thebibliography}[1]{%
  \oldbibliography{#1}%
  \setlength{\itemsep}{.4\baselineskip}%
}


\author{Alexander Kolpakov}
\address{Institut de math\'ematiques, Rue Emile-Argand 11, 2000 Neuch\^atel, Switzerland}
\email{kolpakov (dot) alexander (at) gmail (dot) com}

\author{Alan W. Reid}
\address{Department of Mathematics,  Rice University, 6100 Main Street, Houston, TX 77005, USA}
\email{alan (dot) reid (at) rice (dot) edu}

\author{Stefano Riolo}
\address{Institut de math\'ematiques, Rue Emile-Argand 11, 2000 Neuch\^atel, Switzerland}
\email{stefano (dot) riolo (at) unine (dot) ch}

\title[Many cusped hyperbolic 3-manifolds do not bound geometrically]{Many cusped hyperbolic 3-manifolds do not bound geometrically}


\begin{document}

\begin{abstract}
In this note, we show that there exist cusped hyperbolic $3$-manifolds that embed geodesically, but cannot bound geo\-met\-ri\-cal\-ly.
Thus, being a geometric boundary is a non-trivial property for such manifolds. Our result complements the work by Long and Reid on geometric boundaries of compact hyperbolic $4$-manifolds, and by Kolpakov, Reid and Slavich on embedding arithmetic hyperbolic manifolds. \\

\noindent
\textit{Key words: } $3$-manifold, $4$-manifold, hyperbolic geometry, cobordism, geometric boundary.\\

\noindent
\textit{2010 AMS Classification: } 57R90, 57M50, 20F55, 37F20. \\
\end{abstract}

\maketitle

\section{Introduction}\label{intro}

In the sequel, all hyperbolic ma\-ni\-folds are assumed to be connected, orientable, complete, and of finite volume. We are particularly interested in \textit{cusped}, i.e. non-compact, such manifolds. 

A hyperbolic $n$-manifold $M$ \emph{bounds geometrically} if it is isometric to $\partial W$, for a hyperbolic $(n+1)$-manifold $W$ with totally geodesic boundary, c.f. \cite{LR}, and also \cite{KMT, KR, LR2, MZ, M, S, S2} for further progress in this topic.
A hyperbolic $n$-manifold $M$ is said to \emph{embed geodesically} if there exists a hyperbolic $(n+1)$-manifold $N$ that contains a totally geodesic hypersurface isometric to $M$. We remark that many arithmetic hyperbolic $3$-manifolds of simplest type embed geodesically by \cite{KRS}.

A geometrically bounding manifold embeds geodesically, but the converse is not necessarily true. Indeed, the Euler characteristic $\chi(M)$ of a geometrically bounding manifold $M$ must be even. This can be seen by taking a hyperbolic $(n+1)$-manifold $N$ with totally geodesic boundary $\partial N = M$, and doubling it along $M$ in order to obtain a hyperbolic manifold $DN$. By the excision property, we have for the Euler characteristic that $\chi(DN) = 2\, \chi(N) - \chi(M)$. If $n$ is odd, we have that $\chi(M) = 0$, while if $n$ is even, then $\chi(DN) = 0$, and $\chi(M)$ is thus even. The fact that an odd-dimensional cusped hyperbolic manifold has $\chi=0$ follows from Margulis' Lemma and the 1\textsuperscript{st} Bieberbach Theorem.

Thus, the thrice-punctured sphere cannot bound geometrically. On the other hand, this manifold is arithmetic and of even dimension, so by \cite{KRS} it embeds geodesically. This same discussion also applies when $n = 4,\, 6$,  since the respective minimal-volume arithmetic manifolds constructed in \cite{ERT, RT} have Euler characteristic $\chi = \pm 1$.
Note that such an argument becomes vacuous if $n$ is odd.

The aim of this note is to provide examples of hyperbolic 3-manifolds that embed geodesically, but fail to bound geo\-met\-ri\-cal\-ly, thereby explicitly showing that bounding is much more non-trivial to arrange for $n=3$ too. 

In particular, we show that several well-known cusped hyperbolic  $3$-manifolds cannot bound geometrically. Namely,  we prove the following theorems, the first of which should be contrasted with \cite{S2}, which shows that the figure-eight knot complement bounds geometrically.

\begin{theorem}\label{thm1}
The figure-eight knot sibling $3$-manifold embeds geodesically but does not bound geometrically.
\end{theorem}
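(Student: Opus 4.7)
The proof splits naturally into two parts: the geodesic embedding, and the failure to bound.

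For the embedding, I would appeal directly to \cite{KRS}. The figure-eight knot sibling is arithmetic, commensurable with the Bianchi orbifold $\matH^3/\mathrm{PGL}(2,\mathcal{O}_{-3})$, and every cusped arithmetic hyperbolic $3$-manifold is of simplest type. Consequently, \cite{KRS} provides a hyperbolic $4$-manifold in which the sibling embeds as a totally geodesic hypersurface.

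For the non-bounding statement, I would assume, for contradiction, that $M$ (the sibling) equals $\partial W$ for some hyperbolic $4$-manifold $W$ with totally geodesic boundary, and derive a contradiction by analysing cusps. The unique cusp of $M$ is a flat $2$-torus $T$; it sits inside a cusp of $W$ whose horospherical cross-section is a compact flat $3$-manifold $F$ with totally geodesic boundary. Because $M$ has exactly one cusp, $\partial F = T$ consists of a single torus. The next step is to classify such $F$: they correspond bijectively to closed flat $3$-manifolds equipped with an orientation-reversing isometric involution whose fixed locus is one connected totally geodesic $2$-torus. Running through Bieberbach's ten flat $3$-manifolds, the natural candidates include the twisted $I$-bundle over the Klein bottle (whose boundary torus is necessarily rectangular) together with suitable $\matZ/2$-quotients of the half-turn flat manifold. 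For each candidate one reads off constraints on the similarity class, affine structure, and cusp holonomy of $T$, and aims to check that the cusp of the sibling---whose modulus lies in $\matQ(\sqrt{-3})$ and has nonzero real part---satisfies none of them.

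The main obstacle is that the similarity class of $\partial F$ is quite flexible in certain cases (for example, the $\matZ/2$-quotient of the half-turn flat manifold admits an arbitrary $2$-dimensional lattice on its boundary), so an argument relying only on the conformal shape of the cusp torus will not suffice. A finer invariant must therefore be isolated---for instance, an algebraic condition on the cusp parameter in $\matQ(\sqrt{-3})$, an equivariant constraint on how the cusp holonomy of $M$ sits inside the full $3$-dimensional crystallographic cusp group of $W$, or a cusped analogue of the $\eta$-invariant obstruction of Long--Reid---such that this invariant is compatible with the figure-eight knot complement (as in \cite{S2}) but not with the sibling. Pinning down the right invariant and verifying its failure on the sibling's cusp is the heart of the proof.
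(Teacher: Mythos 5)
Your first half (the geodesic embedding via arithmeticity and \cite{KRS}) matches the paper's argument. The second half has the right overall strategy --- put the cusp torus $T$ of $M$ on the boundary of a compact flat $3$-manifold $F$ with connected totally geodesic boundary sitting as the section of a boundary cusp of $W$, and classify the possible $F$ --- but you stop exactly where the proof actually happens, and you talk yourself out of the argument that works. The classification is not the open-ended list you describe: the Riemannian universal cover of a compact flat manifold with non-empty totally geodesic boundary is a slab $\matR^2\times[-a,a]$ (it is an intersection of half-spaces with disjoint boundaries, and in the flat case compactness forces exactly two parallel ones), so $F$ is an $I$-bundle over a closed flat surface. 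Connectedness of $\partial F$ forces the bundle to be twisted, hence the base is a Klein bottle, and orientability of $W$ (hence of $F$) forces $F\cong K\widetilde{\times}I$. Your extra candidates (``$\matZ/2$-quotients of the half-turn flat manifold'' with flexible boundary lattice) do not occur: any such quotient with connected totally geodesic boundary is again one of the two twisted $I$-bundles, and the orientable one is $K\widetilde{\times}I$.

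Consequently your claim that ``an argument relying only on the conformal shape of the cusp torus will not suffice'' is wrong, and the ``finer invariant'' you propose to search for is unnecessary. For $K\widetilde{\times}I\cong(\matR^2\times[-a,a])/\Gamma$ with $\Gamma=\langle T_x,T_y,R_x\rangle$ as in the standard presentation of the Klein bottle group, the relation $R_xT_yR_x^{-1}=T_y^{-1}$ forces the translation lattice $\langle T_x,T_y\rangle$ acting on $\matR^2\times\{a\}$ to be generated by two orthogonal vectors; so the single boundary torus of $F$ is always \emph{rectangular}, i.e.\ has purely imaginary modulus. Since the sibling manifold has one cusp of modulus $\omega=\frac{-1+\sqrt{-3}}{2}$, which is not rectangular, it cannot bound geometrically. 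That rectangularity statement (Proposition \ref{prop:cusp_bounding_3d} in the paper, resting on Lemma \ref{lem:flat_boundary}) is the key step your proposal is missing; as written, your proof is incomplete because the obstruction that would finish it is never identified.
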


The figure-eight knot complement and its ``sibling'' manifold are precisely the cusped hyperbolic $3$-manifolds of smallest volume \cite{CM}. Both of them are known to be arithmetic \cite{MaRe} with invariant trace-field $\mathbb{Q}(\sqrt{-3})$. Our methods also show:

\begin{theorem}\label{thm2}
A single-cusped hyperbolic $3$-manifold with invariant trace-field of odd degree does not bound geometrically.
\end{theorem}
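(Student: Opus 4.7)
The plan is to argue by contradiction: assuming $M$ bounds geometrically, I will use cusp geometry of a hyperbolic $4$-manifold with boundary to constrain the cusp shape $\tau$ of $M$, and then invoke the Neumann--Reid cusp-field theorem (for a one-cusped hyperbolic $3$-manifold, $k(M)=\mathbb{Q}(\tau)\subset\mathbb{C}$) to force $[k(M):\mathbb{Q}]$ to be even, contradicting the hypothesis.

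Suppose $M=\partial W$ for some orientable hyperbolic $4$-manifold $W$ with totally geodesic boundary. The single cusp of $M$ must lie in a cusp $c$ of $W$ (else $W$ would have infinite volume), and a horospherical cross-section of $c$ is a compact orientable flat $3$-manifold $N$ whose unique boundary component $T=\partial N$ is the cusp cross-section torus of $M$, totally geodesic in $N$. Let $\Lambda=\mathbb{Z}+\mathbb{Z}\tau\subset\mathbb{C}$ be the cusp lattice of $T$. The crux of the argument is the following structural lemma: \emph{the lattice $\Lambda$ admits a non-trivial reflection symmetry.}

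To prove the lemma, the universal cover $\tilde N$ is a simply connected complete flat $3$-manifold with totally geodesic boundary, so its boundary consists of a family of parallel $2$-planes; this forces $\tilde N$ to be $\mathbb{R}^3$ (excluded as $N$ has boundary), a half-space (excluded since every discrete cocompact isometric action preserving a half-space has infinite-volume quotient), or a slab $\mathbb{R}^2\times[0,h]$. Hence $\tilde N$ is a slab; the index-$2$ subgroup $\pi_1(N)^+$ preserving each boundary plane acts on it by pure translations forming $\Lambda$ (otherwise $T$ would be an orbifold); and since $\partial N$ is connected, some $\sigma\in\pi_1(N)$ swaps the two boundary planes. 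Writing $\sigma(x,z)=(f(x)+v,h-z)$ with $f\in O(2)$ and $v\in\mathbb{R}^2$, orientability of $N$ forces $\det\mathrm{diag}(f,-1)=+1$, so $\det f=-1$ and $f$ is a reflection; since $\sigma$ normalizes $\Lambda$, the reflection $f$ preserves $\Lambda$, proving the lemma.

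Planar-lattice theory now gives that, after an $\mathrm{SL}(2,\mathbb{Z})$ change of basis, $\tau$ lies on the imaginary axis $\{b\mathrm{i}:b>0\}$ or on the line $\{\tfrac{1}{2}+b\mathrm{i}:b>0\}$, and in either case $\mathbb{Q}(\tau)=\mathbb{Q}(b\mathrm{i})$. Since $b\mathrm{i}\notin\mathbb{R}$ satisfies the irreducible polynomial $x^2+b^2$ over the real subfield $\mathbb{Q}(b^2)=\mathbb{Q}(\tau)\cap\mathbb{R}$, one obtains $[k(M):\mathbb{Q}]=[\mathbb{Q}(\tau):\mathbb{Q}]=2\,[\mathbb{Q}(b^2):\mathbb{Q}]$, an even integer, contradicting the odd-degree hypothesis. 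The main technical obstacle I expect is the structural lemma --- in particular, the careful exclusion of the half-space possibility for $\tilde N$, the verification that $\pi_1(N)^+$ acts by pure translations so that $T$ is an honest torus rather than a $2$-orbifold, and the orientability bookkeeping that yields $\det f=-1$; the arithmetic endgame is then essentially formal.
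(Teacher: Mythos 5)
Your proposal is correct and follows essentially the same route as the paper: show that the cusp torus of a one-cusped geometric boundary bounds a compact flat $3$-manifold with connected totally geodesic boundary inside a cusp of $W$, deduce a symmetry of the cusp lattice from the slab structure of the universal cover, and play the resulting even degree of the cusp field $\mathbb{Q}(\tau)$ against the odd degree of the invariant trace field. Two small remarks. First, what Neumann--Reid actually prove is the \emph{containment} of the cusp field in the invariant trace field, not the equality $k(M)=\mathbb{Q}(\tau)$ you quote (equality is not known in general); this is harmless, since containment already forces $[\mathbb{Q}(\tau):\mathbb{Q}]$ to divide the odd number $[k(M):\mathbb{Q}]$ and hence to be odd, contradicting the evenness you establish --- the paper avoids the citation altogether by an explicit trace computation showing the modulus lies in $k(M)$. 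Second, the paper pins down the flat piece as the twisted $I$-bundle over the Klein bottle via Wolf's classification and concludes the cusp is genuinely rectangular, whereas your reflection-symmetry lemma only yields rectangular-or-rhombic; since $\mathbb{Q}(\tau)$ has even degree in both cases, your weaker conclusion suffices.
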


There are many such examples of single-cusped hyperbolic $3$-manifolds, indeed even arising as knot complements in $S^3$.   We record the following corollary of Theorem \ref{thm2}.  This follows automatically from \cite{HS} which establishes that
if $K_m$ is the $m$-twist knot (see Figure~\ref{fig:knots}) the degree of the invariant trace-field is given by $\cro(K_m)-2$ where $\cro(K_m)$ is the crossing number of $K_m$ (here $m\neq -2,-1,0,1$). Note that the figure-eight knot is the $2$-twist knot according to Figure~\ref{fig:knots}. If we assume that $m\geq 2$, then $\cro(K_m)-2=m$.

\medskip

\begin{figure}[h]
\includegraphics[scale=0.45]{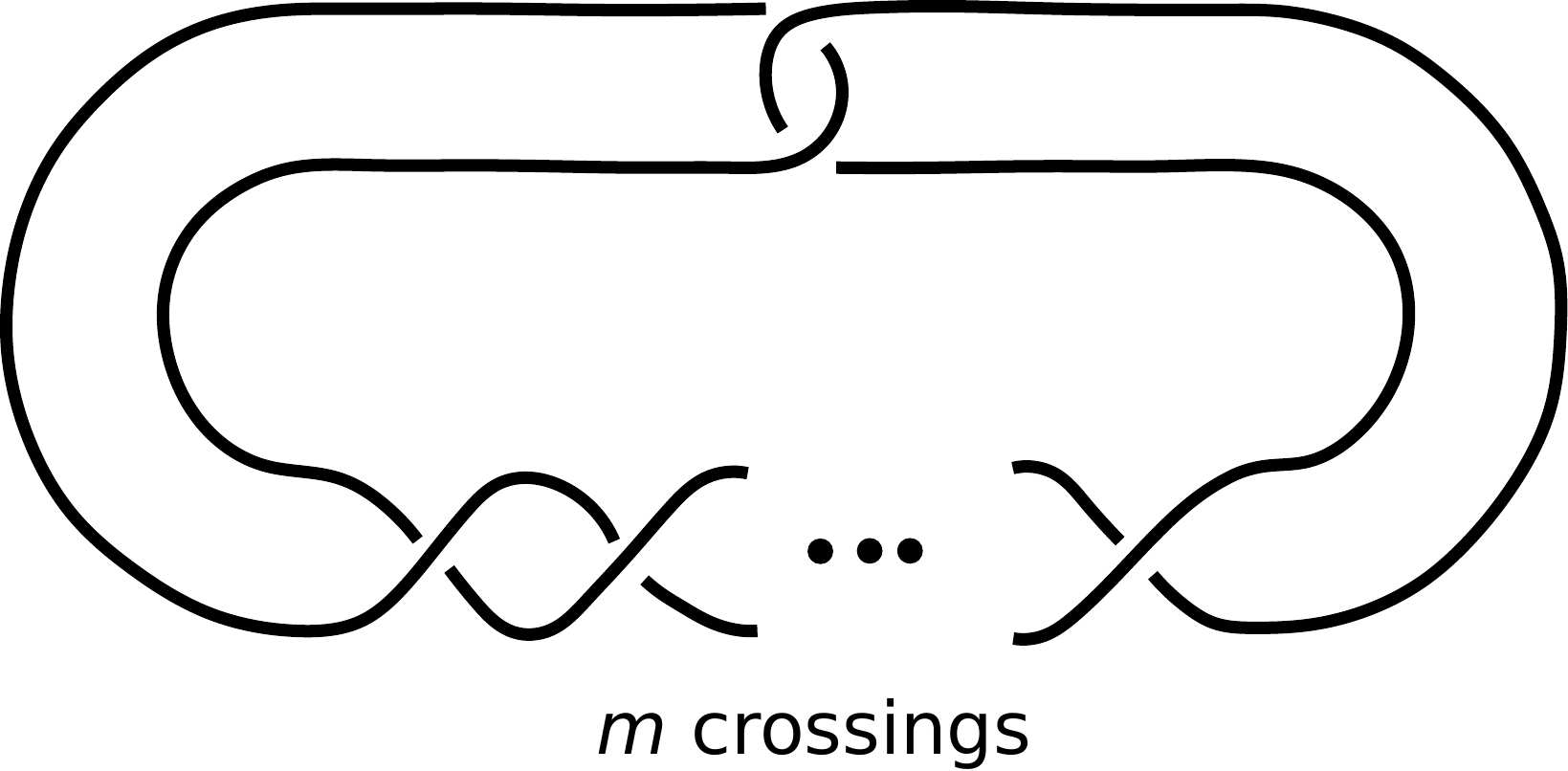}
\caption{The twist knot $K_m$ ($m>0$) in its alternating (and thus minimal) projection.}
\label{fig:knots}
\end{figure}

\begin{cor}
\label{twist}
Let $K_m$ be the $m$-twist knot with $m>1$ odd. Then $\mathbb{S}^3\smallsetminus K_m$ does not bound geometrically.\end{cor}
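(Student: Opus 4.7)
The plan is to invoke Theorem \ref{thm2} directly, so I only need to check that $\mathbb{S}^3\smallsetminus K_m$ satisfies its two hypotheses: it is a single-cusped hyperbolic $3$-manifold, and its invariant trace-field has odd degree over $\mathbb{Q}$.

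First I would recall the standard facts about twist knots. For $m \neq -2,-1,0,1$, the twist knot $K_m$ is neither a torus knot nor a satellite knot, hence by Thurston's classification its complement $\mathbb{S}^3\smallsetminus K_m$ is hyperbolic. Being the complement of a knot in $\mathbb{S}^3$, this manifold carries exactly one cusp. In particular this applies to every odd $m>1$, so the first hypothesis of Theorem \ref{thm2} is met.

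Next I would invoke the theorem of Hoste--Shanahan \cite{HS} cited in the excerpt, which computes the degree of the invariant trace-field of $\mathbb{S}^3\smallsetminus K_m$ to be $\cro(K_m)-2$. For $m\geq 2$ the diagram in Figure~\ref{fig:knots} is alternating, hence minimal by the Kauffman--Murasugi--Thistlethwaite theorem, and its crossing number is $m+2$. Therefore the invariant trace-field has degree exactly $m$, which is odd by hypothesis.

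With both hypotheses verified, Theorem \ref{thm2} immediately yields that $\mathbb{S}^3\smallsetminus K_m$ does not bound geometrically. There is essentially no obstacle here beyond correctly quoting \cite{HS} and confirming the crossing-number count, both of which are routine; the corollary is a direct specialization of Theorem \ref{thm2}.
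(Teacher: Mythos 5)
Your proposal is correct and follows exactly the paper's own route: the corollary is derived by applying Theorem \ref{thm2} together with the Hoste--Shanahan computation that the invariant trace-field of $\mathbb{S}^3\smallsetminus K_m$ has degree $\cro(K_m)-2=m$, which is odd. The extra details you supply (hyperbolicity via Thurston, minimality of the alternating diagram) are correct and merely make explicit what the paper leaves implicit.
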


Note that  the figure-eight knot's sibling manifold does not satisfy Theorem~\ref{thm2}, since as noted above its invariant trace-field has degree two. In addition, at present we do not know {\em a single example} of a finite volume hyperbolic 3-manifold with odd degree invariant trace-field that even embeds geodesically.

The proofs of Theorems \ref{thm1} and \ref{thm2} essentially follow from a simple observation about the cusps of manifolds that bound geometrically, which applies in all dimensions. In the case of $n=3$, this implies that the cusp of a  single-cusped hyperbolic $3$-manifold that bounds geo\-met\-ri\-cal\-ly must be ``rectangular''(c.f. Proposition \ref{prop:cusp_bounding_3d}). We refer to Section \ref{sec:cusps} for the terminology.  In the case of $n=4$ this gives another proof that the minimal volume hyperbolic $4$-manifolds of \cite{RT} do not bound geometrically (see Section \ref{RT_notbound}).

\begin{rem}
Nimershiem \cite{N} proved that the cusp shapes of single-cusped hyperbolic $3$-manifolds form a dense subset in the moduli space of the $2$-torus.
Since the set of rectangular tori is nowhere dense, this lends credence to the claim that ``most single-cusped hyperbolic $3$-manifolds do not bound geometrically''.
Similar considerations were known to hold in the compact setting. Indeed, in \cite{LR} it is shown that if a closed hyperbolic $3$-manifold bounds geometrically, then it has integral $\eta$-invariant.  On the other hand, Meyerhoff \cite{Me} showed that a reduction
modulo $\frac{1}{3}$ of the $\eta$-invariant of closed hyperbolic $3$-manifolds takes values in a dense subset of the circle.\end{rem}

As a possible measure of geometric ``complexity'' of embedding geodesically a hyperbolic $n$-manifold $M$ into a hyperbolic $(n+1)$-manifold $N$, or making $M$ bound a hyperbolic $(n+1)$-manifold $W$ geometrically, we introduce the following quantities:
$$\evol(M)=\min_N\mathrm{vol}(N),\quad\bvol(M)=\min_W\mathrm{vol}(W),$$
where ``$\mathrm{vol}$'', here and below, means hyperbolic volume. It is easy to see that if $M$ bounds, then $\evol(M)\leq 2\cdot \bvol(M)$. 

In \cite{S2}, Slavich proved that the figure-eight knot complement has $\bvol=\frac{4\pi^2}3$, i.e. the minimum possible by the Gau\ss-Bonnet theorem.
Concerning $\evol$, we adopt his technique to improve Theorem \ref{thm1} as follows:

\begin{theorem}\label{thm4}
The figure-eight knot complement and its sibling manifold have $\evol=\frac{4\pi^2}{3}$.
\end{theorem}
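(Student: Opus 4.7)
The plan is to prove Theorem~\ref{thm4} in two steps: a Gau\ss--Bonnet lower bound that holds for every cusped hyperbolic $3$-manifold which embeds geodesically, and an explicit upper bound obtained by adapting Slavich's construction from \cite{S2}.

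For the lower bound, any connected, orientable, complete, finite-volume hyperbolic $4$-manifold $N$ has positive integer Euler characteristic, so the Gau\ss--Bonnet theorem yields
\[
\mathrm{vol}(N) \;=\; \frac{4\pi^2}{3}\,\chi(N) \;\geq\; \frac{4\pi^2}{3}.
\]
Hence $\evol(M) \geq \frac{4\pi^2}{3}$ for every cusped $3$-manifold $M$ that embeds geodesically, and the bound is attained exactly when $M$ sits as a totally geodesic hypersurface in a hyperbolic $4$-manifold of minimal possible Euler characteristic $\chi = 1$.

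For the upper bound, let $M$ denote either the figure-eight knot complement or its sibling. We must construct a hyperbolic $4$-manifold $N$ with $\chi(N) = 1$ (equivalently $\mathrm{vol}(N) = \frac{4\pi^2}{3}$) that contains a totally geodesic copy of $M$. The starting point is Slavich's construction, which assembles a $4$-manifold $W$ with geodesic boundary the figure-eight knot complement from the regular ideal right-angled $24$-cell --- whose hyperbolic volume is precisely $\frac{4\pi^2}{3}$ and whose $24$ facets are regular ideal octahedra --- by pairing some of the facets isometrically and leaving the others to form $\partial W$. The idea is to modify the pairing so that \emph{no} facet is left free, eliminating the boundary while forcing $M$ to appear as a totally geodesic interior cross section. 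Because both target $M$'s decompose into two regular ideal tetrahedra, and because each octahedral facet of the $24$-cell further decomposes into regular ideal tetrahedra, such a cross section can be located combinatorially inside the $24$-cell and carried coherently through the gluing. Switching between the two inequivalent face-pairings of two regular ideal tetrahedra at the level of this cross section is precisely what distinguishes the case of the figure-eight knot complement from the case of its sibling.

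The main obstacle is to exhibit an explicit identification of the $24$ octahedral facets that is simultaneously (i) an orientation-preserving isometric side-pairing with trivial holonomy around every codimension-two face, so that the quotient is an orientable manifold rather than an orbifold; (ii) complete at each cusp cross-section; and (iii) equipped with a totally geodesic $3$-dimensional cross section isometric to the prescribed $M$. This reduces to a finite combinatorial verification of the same flavour as that carried out in \cite{S2}, and I expect it to be the crux of the argument; the bookkeeping is handled by hand or with computer assistance, mirroring Slavich's original approach.
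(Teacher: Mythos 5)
Your lower bound is correct and is exactly what the paper relies on implicitly: an orientable finite-volume hyperbolic $4$-manifold satisfies $\mathrm{vol}(N)=\frac{4\pi^2}{3}\chi(N)$ with $\chi(N)$ a positive integer, so $\evol(M)\geq\frac{4\pi^2}{3}$. The gap is in the upper bound, which is where all the content of the theorem lies, and your outline of it rests on a false premise. Slavich's construction in \cite{S2} (the one quoted here as Proposition 3.9) is built not from the right-angled ideal $24$-cell but from the ideal \emph{rectified $5$-cell}, of volume $\frac{2\pi^2}{9}$, whose tetrahedral facets assemble into the totally geodesic boundary; the $24$-cell appears only in the earlier paper \cite{S}, where the bounding manifold is an \emph{octahedral} link complement. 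Your key step --- ``each octahedral facet of the $24$-cell further decomposes into regular ideal tetrahedra'' --- is false: the regular ideal octahedron (volume $\approx 3.664$) is not a union of regular ideal tetrahedra (volume $\approx 1.015$), and the totally geodesic cross-sections one controls combinatorially in a $24$-cell manifold are tessellated by octahedra, so neither the figure-eight complement nor its sibling (both tetrahedral manifolds) can be located by the mechanism you describe.

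The paper's actual argument assembles six rectified $5$-cells according to an explicit $6$-valent orientable triangulation with trivial return maps (three ``cones'' $X$, $Y$, $Z$ over the two-tetrahedron triangulations of the sibling, the figure-eight, and the sibling again, with the six free facets paired by explicit permutations), producing a manifold $W$ of volume $\frac{4\pi^2}{3}$ with $\partial W\cong K\sqcup L\sqcup L\sqcup O$. The second essential idea, which your sketch also misses, is how to close up the boundary \emph{without doubling} (doubling would give volume $\frac{8\pi^2}{3}$): one quotients the $K$ and $O$ boundary components by free orientation-reversing isometric involutions (each is the orientation double cover of a non-orientable manifold) and glues the two $L$ components to each other, keeping the volume at $\frac{4\pi^2}{3}$ while making $L$ totally geodesic in the interior; the figure-eight case is handled the same way starting from Slavich's $W''$ with $\partial W''\cong K\sqcup K\sqcup K\sqcup O'$. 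Since you explicitly defer the combinatorial construction as ``the crux of the argument,'' and the construction you point toward cannot produce these manifolds, the proposal does not establish the upper bound.
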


Note that if $M$ bounds geometrically, resp. embeds geodesically, then Miyamoto's work \cite{Miyamoto} implies that
$\bvol(M) \geq d_{n+1}\, \mathrm{vol}(M)$, resp.  $\evol(M) \geq 2\, d_{n+1}\, \mathrm{vol}(M)$,
where $d_n$ is the optimal horoball packing density in $\mathbb{H}^n$, c.f. \cite[Table~3]{Kellerhals}. The former inequality holds also if $M$ is disconnected, and the latter follows by cutting any $N$ in which $M$ embeds along the respective hyper-surface isometric to $M$, and considering the resulting manifold $N_{\slash\!\!\slash} M$ with boundary $M \sqcup (-M)$. The equality, in the case of bounding manifolds, can be attained only in dimensions $n = 2$ and $3$. 

The paper is organised as follows: in Section \ref{sec:cusps} we prove Theorems \ref{thm1} and \ref{thm2}, in Section \ref{sec:sibling} we prove Theorem \ref{thm4}.

\begin{small}
\section*{Acknowledgments}
{\noindent
The authors were supported by the Swiss National Science Foundation project no.~PP00P2-170560 (A.K. and S.R.) and N.S.F. grant DMS-$1812397$ (A.R.). They would like to thank Danny Ruberman (Brandeis University, USA) and Bruno Martelli (Universit\`{a} di Pisa, Italy) for fruitful discussions.
}
\end{small}

\section{Cusp sections of geometric boundaries} \label{sec:cusps}

In this section, we provide a simple obstruction for a non-compact manifold to bound geometrically, formulated in Propositions \ref{prop:cusp_bounding} and \ref{prop:cusp_bounding_3d}, which we then use in order to prove Theorems~\ref{thm1} and \ref{thm2}. 

\subsection{Cusps with boundary}\label{subsec:cusps}
In this section, we analyse the ends of a hyperbolic manifold with totally geodesic boundary.  We basically follow \cite[2.10.C]{GPS}.

\begin{defn}
An $(n+1)$-dimensional \emph{cusp with boundary} is a Riemannian warped product $C = F \times_f (0,+\infty)$, where $F$ is a compact connected flat $n$-manifold with totally geodesic boundary and $f(r)=e^{-r}$. 
\end{defn}

This means that $F\times (0,+\infty)$ is endowed with the Riemannian metric $e^{-2r}g+dr^2$, where $g$ is the flat metric on $F$. 

\begin{defn}
A \emph{section} of a cusp $C$ as above is a level set $F \times \{r_0\} \subset C$. 
\end{defn}

Note that all sections of $C$ are homothetic. 

\begin{defn}
The \emph{shape} of a cusp $C$ is the similarity class of a section.
\end{defn}

The following fact is well known:

\begin{prop}{\cite[2.10.D]{GPS}}
Let $W$ be a complete, finite-volume hyperbolic $(n+1)$-manifold with (possibly empty) totally geodesic boundary.  There is a compact subset of $W$ whose complement is isometric to a disjoint union of cusps with (possibly empty) boundary.
\end{prop}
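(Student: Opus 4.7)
The plan is to reduce to the classical boundaryless case by doubling. Let $DW$ denote the double of $W$ along $\partial W$; since $\partial W$ is totally geodesic, $DW$ inherits a complete finite-volume hyperbolic structure without boundary, and carries an isometric involution $\sigma: DW \to DW$ whose fixed-point set is $\partial W$ and whose quotient is $W$.

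First, I would apply the classical thick--thin decomposition to $DW$ (a consequence of the Margulis lemma combined with the finite-volume hypothesis): there is a compact subset $K_0 \subset DW$ whose complement is a disjoint union of open cusp neighborhoods $C_1, \dots, C_m$, each isometric to a warped product $T_i \times_f (0,+\infty)$ with $T_i$ a closed flat $n$-manifold and $f(r)=e^{-r}$. Because the Margulis thin part is defined intrinsically (through the injectivity radius), it is automatically $\sigma$-invariant, and hence $\sigma$ permutes the cusps $C_1,\dots,C_m$.

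Next, I would split the case analysis according to the $\sigma$-action on the collection of cusps. If $\sigma(C_i)=C_j$ for some $j\neq i$, then the pair $C_i\cup C_j$ descends in $W$ to a single cusp disjoint from $\partial W$, i.e.\ a cusp with empty boundary in the sense of the preceding definition. If instead $\sigma(C_i)=C_i$, then $\sigma$ restricts to an isometric involution of $C_i$ whose fixed set is precisely $\partial W\cap C_i$, a codimension-one subset. Any isometry of a cusp preserves the horospherical foliation (the level sets of the Busemann function to the cusp point), so $\sigma|_{C_i}$ is determined by its induced isometry $\rho_i$ on the cross-section $T_i$; the codimension-one fixed set forces $\rho_i$ to be a reflection on $T_i$. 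Therefore $F_i := T_i/\rho_i$ is a compact connected flat $n$-manifold with totally geodesic boundary, and $C_i/\sigma$ is isometric to $F_i\times_f (0,+\infty)$, exactly a cusp with boundary.

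Finally, $K_0$ can be chosen to be $\sigma$-invariant (e.g.\ taking the closure of the complement of $\bigsqcup_i C_i$, which is), and its image in $W$ is the compact subset asked for in the statement. The main technical obstacle is the analysis of fixed cusps: one needs that $\sigma|_{C_i}$ splits compatibly with the warped-product structure, so that the quotient truly has the form $F_i\times_f (0,+\infty)$ rather than some more general metric on $F_i\times(0,+\infty)$. This is what horosphere preservation ensures, but it is the step that requires the most care in matching Riemannian structures on both sides of the quotient.
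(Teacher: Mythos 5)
Your argument is essentially correct, but note that the paper does not prove this proposition at all: it is quoted as a known fact from Gromov--Piatetski-Shapiro \cite[2.10.D]{GPS}, whose treatment works directly with the manifold-with-boundary (analysing the holonomy of $W$, whose universal cover sits in $\mathbb{H}^{n+1}$ as an intersection of half-spaces, and applying the Margulis lemma to the parabolic fixed points). Your doubling trick is a clean alternative: it buys you the classical boundaryless thick--thin decomposition off the shelf, at the cost of having to descend the cusp structure through the involution $\sigma$. The reduction is sound --- $DW$ is smooth and hyperbolic precisely because $\partial W$ is totally geodesic, the thin part is $\sigma$-invariant since the injectivity radius is an isometry invariant, and swapped cusps descend to boundaryless cusps of $W$. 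Three points in the fixed-cusp case deserve to be shored up. First, a cusp preserved setwise by $\sigma$ may contain no fixed points at all (if $\partial W$ misses it), in which case $\sigma$ acts freely on it and the quotient is again a cusp with empty boundary; your dichotomy silently assumes $\mathrm{Fix}(\sigma)\cap C_i\neq\emptyset$. Second, to see that $T_i/\rho_i$ is a flat \emph{manifold} with totally geodesic boundary rather than an orbifold, you should say that every component of $\mathrm{Fix}(\rho_i)$ has codimension one (true, since $\mathrm{Fix}(\sigma)=\partial W$ is a codimension-one submanifold of $DW$), so the differential of $\rho_i$ at each fixed point is a hyperplane reflection and the quotient is locally a half-space. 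Third, the splitting $\sigma|_{C_i}=(\rho_i,\mathrm{id})$ needs the observation that the level sets $T_i\times\{r\}$ are intrinsically characterised (e.g.\ by injectivity radius, or as boundaries of the sub-level sets of the Busemann function of the end), that an isometry of $T_i\times_f(0,+\infty)$ onto itself cannot shift levels (the induced self-map of $(0,+\infty)$ is an isometry fixing the thin end, hence the identity), and that it carries the orthogonal geodesics $\{x\}\times(0,+\infty)$ to orthogonal geodesics, so $\rho_i$ does not depend on $r$. None of these is a genuine obstruction; with them filled in, your proof is complete and arguably more self-contained than the citation the paper relies on.
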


Given $W$ as above, we call \emph{boundary cusps} the cusps of $W$ with non-empty boundary. Each cusp of the hyperbolic $n$-manifold $\partial W$ is a boundary component of a boundary cusp of $W$.  

\subsection{Cusps of bounding manifolds}

We now furnish an obstruction for a hyperbolic manifold to bound geometrically.

\begin{prop} \label{prop:cusp_bounding}
If a cusped hyperbolic $n$-manifold $M$ bounds geometrically, then the cusps of $M$ that do not admit a fixed-point-free orientation-reversing isometric involution are isometric in pairs.
\end{prop}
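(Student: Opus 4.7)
The plan is to analyse the structure of each boundary cusp of $W$ and then read off how it contributes to the cusps of $M = \partial W$. Write such a boundary cusp as $C = F \times_f (0,+\infty)$, where $F$ is a compact connected flat $n$-manifold with non-empty totally geodesic boundary; note that $F$ is orientable since $W$ is, and $C$ is an open subset of $W$. The cusps of $M$ are precisely the ends $T \times_f (0,+\infty)$ coming from the various connected components $T$ of the various $\partial F$, as $C$ ranges over the boundary cusps of $W$.

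First I would pin down the geometry of $F$. The universal cover $\tilde F \subset \matE^n$ is convex, and its boundary is a disjoint union of hyperplanes (the lifts of the components of $\partial F$); the disjointness forces these hyperplanes to be pairwise parallel. A cocompact action of $\Gamma = \pi_1(F)$ rules out the half-space case, since every isometry of $\matE^n$ that preserves a half-space fixes the transverse coordinate pointwise and thus produces a non-compact quotient. Hence $\tilde F$ must be a slab $\matR^{n-1} \times [0,L]$.

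Next I would split into two cases depending on whether $\Gamma$ preserves each of the two bounding hyperplanes of the slab or swaps them. In the first case, every $\gamma \in \Gamma$ acts as $(x,t) \mapsto (A_\gamma x + b_\gamma, t)$, so $F = T \times [0,L]$ is a Euclidean prism over a closed flat $(n-1)$-manifold $T$, and $\partial F$ consists of two isometric copies of $T$ --- contributing a pair of isometric cusps to $M$. In the second case, the index-two subgroup $\Gamma_0 \subset \Gamma$ preserving each face yields a double cover $T \times [0,L] \to F$ whose non-trivial deck transformation has the form $\bar\sigma(x,t) = (\phi(x), L-t)$ for some isometric involution $\phi: T \to T$. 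Orientability of $F$ forces $\bar\sigma$ to preserve orientation, and because $t \mapsto L-t$ reverses the orientation of $[0,L]$, the involution $\phi$ reverses the orientation of $T$. Freeness of $\bar\sigma$ (equivalently, of the $\Gamma$-action on $\tilde F$) translates directly into fixed-point-freeness of $\phi$, since the only candidates for fixed points of $\bar\sigma$ lie on the slice $t = L/2$. The unique boundary component of $F$ is then isometric to $T$, and $\phi$ extends to a fixed-point-free orientation-reversing isometric involution $(x,r) \mapsto (\phi(x), r)$ of the corresponding cusp $T \times_f (0,+\infty)$ of $M$.

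Putting the two cases together, every cusp of $M$ either admits the desired involution (coming from case (ii)) or belongs to an isometric pair (coming from case (i)), which proves the proposition. The main technical step is the slab picture of $\tilde F$; this should be a consequence of standard convexity and cocompactness arguments, and the remainder reduces to bookkeeping inside the isometry group of a Euclidean slab.
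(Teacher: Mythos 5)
Your argument is correct and follows essentially the same route as the paper: both reduce to the observation that the universal cover of a compact flat $n$-manifold with non-empty totally geodesic boundary is a slab $\matR^{n-1}\times[0,L]$, and then split according to whether the deck group preserves or swaps the two boundary hyperplanes (the paper phrases this as the $I$-bundle over a base $B$ being trivial or twisted, with your $\phi$ appearing as the deck transformation of the orientation double cover $\partial F\to B$). The only cosmetic difference is that you work explicitly with the index-two subgroup and the involution $\bar\sigma$ rather than invoking the $I$-bundle/orientation-cover language of the paper's Lemma 2.5.
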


In particular, the number of such cusps has to be even, possibly zero. 
The proof will follow easily from a simple lemma about flat manifolds with totally geodesic boundary:

\begin{lemma}\label{lem:flat_boundary}
Let $F$ be a compact connected orientable flat $n$-manifold with non-empty totally geodesic boundary, such that $F$ is not isometric to a product with an interval. Then, $\partial F$ is connected and has a fixed-point-free orientation-reversing isometric involution.
\end{lemma}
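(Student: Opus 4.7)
The plan is to double $F$ along $\partial F$ to obtain a closed orientable flat $n$-manifold $DF$, and then analyze the resulting orientation-reversing isometric involution $\sigma \colon DF \to DF$ (with $\mathrm{Fix}(\sigma) = \partial F$ and $DF/\sigma = F$) by passing to the universal cover $\mathbb{R}^n$. Writing $DF = \mathbb{R}^n/\Gamma$ with $\Gamma$ torsion-free, I would lift $\sigma$ to $\tilde\sigma \in \mathrm{Isom}(\mathbb{R}^n)$; since $\tilde\sigma^2 \in \Gamma$ has a fixed point and $\Gamma$ acts freely, $\tilde\sigma^2 = \mathrm{id}$. Because $\mathrm{Fix}(\sigma)$ has codimension one, $\tilde\sigma$ is an orientation-reversing involution of $\mathbb{R}^n$ whose fixed set is a hyperplane, i.e.\ $\tilde\sigma$ is a reflection in some affine hyperplane $H_0$. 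Applying the same reasoning to every lift $\gamma\tilde\sigma$ with nonempty fixed set, the preimage of $\partial F$ in $\mathbb{R}^n$ is a union of reflection hyperplanes.

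I would then take a connected component $\tilde F$ of the preimage of $F$, which is a chamber of this hyperplane arrangement; it is a closed convex subset whose interior is simply connected, and it is the universal cover of $F$. Smoothness of $\partial F$ rules out walls of $\tilde F$ meeting inside $\tilde F$, so the walls are pairwise disjoint hyperplanes, hence mutually parallel. Together with convexity this means $\tilde F$ is either a closed half-space or a closed slab $[0, L] \times \mathbb{R}^{n-1}$.

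Let $\Gamma_F \leq \Gamma$ be the stabilizer of $\tilde F$. In the half-space case any $\gamma \in \Gamma_F$ must preserve both $H_0$ and its inward side, which forces $\gamma(x_1, y) = (x_1, A_\gamma y + s_\gamma)$; then $F = [0, \infty) \times (\mathbb{R}^{n-1}/\Gamma_F)$ is non-compact, a contradiction. So $\tilde F$ is a slab; let $\Gamma_F^0 \leq \Gamma_F$ be the subgroup preserving each of the two walls $H_0, H_L$ individually. If $\Gamma_F^0 = \Gamma_F$, the same coordinate analysis yields $F = [0, L] \times (\mathbb{R}^{n-1}/\Gamma_F^0)$, a product with an interval, which is excluded by hypothesis. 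Hence $[\Gamma_F : \Gamma_F^0] = 2$ and any $\tau \in \Gamma_F \setminus \Gamma_F^0$ has the form $\tau(x_1, y) = (L - x_1, A y + s)$; since $\tau$ identifies $H_0$ with $H_L$, the boundary $\partial F$ is connected.

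To produce the required involution on $\partial F$, I would first quotient $\tilde F$ by $\Gamma_F^0$ to obtain $[0, L] \times \partial F$, on which $\tau$ descends as $(x_1, p) \mapsto (L - x_1, \phi(p))$ for a well-defined isometry $\phi \colon \partial F \to \partial F$. Since $\tau^2 \in \Gamma_F^0$ acts trivially on the quotient, $\phi^2 = \mathrm{id}$. Freeness of the $\tau$-action on $[0, L] \times \partial F$, needed so that $F$ is a smooth manifold rather than acquiring orbifold points along $\{L/2\} \times \mathrm{Fix}(\phi)$, forces $\phi$ to be fixed-point free. Finally, $\tau$ lies in the orientation-preserving group $\Gamma$ while reversing the interval factor, so $\phi$ must reverse the orientation of $\partial F$. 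I expect the main obstacle to be the convexity and parallel-walls argument in step two; once $\tilde F$ is pinned down as a slab, the rest is a straightforward unfolding of the two-to-one quotient.
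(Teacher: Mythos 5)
Your proof is correct and follows essentially the same route as the paper's: both arguments reduce to showing that the universal cover of $F$ is a slab $\mathbb{R}^{n-1}\times[0,L]$, and that the non-product hypothesis forces the deck group to contain an element swapping the two walls, which descends to the desired fixed-point-free orientation-reversing involution on the (therefore connected) boundary. The only real difference is cosmetic: you establish the slab structure by doubling $F$ and analysing the reflection hyperplanes upstairs, whereas the paper invokes the standard fact that the universal cover of a constant-curvature manifold with totally geodesic boundary is an intersection of half-spaces with pairwise disjoint boundaries, and then packages the final step as the statement that $\partial F\to B$ is the orientation double cover of the non-orientable base of the twisted $I$-bundle $F\to B$.
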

\begin{proof}
The Riemannian universal cover of any constant-curvature manifold with totally geodesic boundary embeds isometrically into the model space as the intersection of some half-spaces with pairwise disjoint boundaries. In the flat case, the number of such half-spaces can only be $0$, $1$, or $2$. The first case is excluded because $\partial F\neq\emptyset$, while the second one is excluded because $F$ is compact.

Thus, the universal cover $\widetilde F$ of $F$ is isometric to a strip $\matR^{n-1}\times I\subset\matR^n$, where $I=[-a,a]$, and $F$ is isometric to a quotient of this strip by a discrete group of Euclidean isometries acting on it.
Every isometry of the strip $\widetilde F$ must preserve the $I$-fibration and the $0$-section $\matR^{n-1}\times\{0\}$.
This implies that there is an $I$-bundle $\pi\colon F\to B$ whose $0$-section $B_0\subset F$ is a totally geodesic hypersurface. Note that $\pi\big|_{\partial F}$ is a Riemannian double covering.

By hypothesis the bundle $\pi$ is non-trivial, so $B_0$ is a one-sided hypersurface inside $F$, and $B$ is non-orientable. Thus, $\pi\big|_{\partial F}$ is the orientation double cover of $B$, and $\partial F$ has a fixed-point-free orientation-reversing involution. 
\end{proof}

\begin{rem} \label{rem:klein}
It follows from the proof of Lemma \ref{lem:flat_boundary} that if $n=3$ and $\partial F$ is connected, then $F$ is diffeomorphic to $K\widetilde{\times} I$, which is the orientable manifold arising as a twisted $I$-bundle over the Klein bottle.
\end{rem}

We are ready to prove Proposition \ref{prop:cusp_bounding}.

\begin{proof}[Proof of Proposition \ref{prop:cusp_bounding}]
Let $M=\partial W$ for a hyperbolic $(n+1)$-manifold $W$ with totally geodesic boundary, and let $C'\subset M$ be a cusp of $M$ with section $F'$.
Then, $F'\subset \partial F$ is a boundary component of a section $F$ of a boundary cusp $C$ of $W$.
If $F'$ has no fixed-point-free orientation-reversing involution, by Lemma~\ref{lem:flat_boundary} we have $F\cong F'\times I$, and $F'$ is isometric to a section of another cusp of $M$.
\end{proof}

\subsection{Rectangular tori}
In this section, we give a more precise characterisation of cusp shapes of geometrically bounding manifolds in the case $n=3$, and then prove Theorems \ref{thm1} and \ref{thm2}. 

A cusp of a hyperbolic $3$-manifold has section a flat $2$-torus. Recall that a flat torus $T=\matR^2/\Gamma$ has a fixed-point-free orientation-reversing isometric involution if and only if a conjugate of the lattice $\Gamma$ is generated by two vectors that span a rectangle or a rhombus. We call such flat tori respectively \emph{rectangular} or \emph{rhombic}.

In the usual fundamental domain
$$\mathcal{D}=\{|z|\geq1,\ |\mathrm{Re}(z)|\leq1/2,\ \mathrm{Im}(z)>0\}\subset\matC$$
for the moduli space of tori (c.f. for instance \cite[\S 12.2]{FM} and \cite[\S 4.2]{MNPR}), the rectangular and rhombic ones correspond to the curves $\mathcal{D}\cap\{\mathrm{Re}(z)=0\}$ and $\mathcal{D}\cap\{|z|=1\ \mathrm{or}\ |\mathrm{Re}(z)|=1/2\}$, respectively. Thus, we call a cusp of a hyperbolic 3-manifold \emph{rectangular} or \emph{rhombic} depending on the shape of its section.

With these definitions in hand, we can now improve Proposition \ref{prop:cusp_bounding}:

\begin{prop} \label{prop:cusp_bounding_3d}
If a cusped hyperbolic $3$-manifold $M$ bounds geometrically, then the non-rectangular cusps of $M$ are isometric in pairs.  
\end{prop}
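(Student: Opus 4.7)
The plan is to refine Proposition \ref{prop:cusp_bounding} using the fact that in the $3$-dimensional case every cusp section is a flat $2$-torus. Proposition \ref{prop:cusp_bounding} already pairs up cusps whose sections admit no fixed-point-free orientation-reversing isometric involution; among flat $2$-tori, these are precisely the tori that are neither rectangular nor rhombic. Hence, to establish Proposition \ref{prop:cusp_bounding_3d}, the only additional task is to show that rhombic but non-rectangular cusps are also paired.

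First I would retrace the setup in the proof of Proposition \ref{prop:cusp_bounding}: a non-rectangular cusp $C'$ of $M=\partial W$, with torus section $F'$, is realised as a boundary component of a section $F$ of some boundary cusp $C$ of $W$. The object $F$ is a compact connected orientable flat $3$-manifold with non-empty totally geodesic boundary. By the proof of Lemma \ref{lem:flat_boundary} together with Remark \ref{rem:klein}, exactly one of two possibilities holds: either $F$ is isometric to a product $F'\times I$, in which case $\partial F$ has a second component isometric to $F'$ and we obtain the desired pairing with another cusp of $M$; or $F$ is isometric to a twisted $I$-bundle $K\widetilde{\times} I$ over a flat Klein bottle $K$, in which case $\partial F$ is connected and coincides with the orientation double cover of $K$.

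The key step is then to rule out the second possibility by proving the purely Euclidean statement that the orientation double cover of any flat Klein bottle is a rectangular torus. A flat Klein bottle may be written as $\mathbb{R}^2/\Gamma$ where, after an affine change of coordinates, $\Gamma$ is generated by a glide reflection $g$ along the $x$-axis and a second generator $b$; the Klein-bottle relation forces the linear part of $b$ to commute with that of $g$ up to sign, and the only option is that $b$ is a pure translation along the $y$-direction, orthogonal to the glide axis. The orientation-preserving index-$2$ subgroup of $\Gamma$ is then generated by the two translations $g^2$ and $b$, which span a rectangular lattice. Hence $\partial F$ would be rectangular, contradicting our assumption on $F'$, so only the product case can occur.

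The main obstacle is the crystallographic verification in the last paragraph, namely that the Klein-bottle relation between a glide reflection and a second Euclidean isometry always forces the second generator to act by translation orthogonal to the glide axis. This is elementary but must be phrased carefully in a fixed normal form to avoid introducing extra case distinctions.
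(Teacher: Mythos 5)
Your proof is correct and follows essentially the same route as the paper's: both reduce, via Lemma~\ref{lem:flat_boundary} and Remark~\ref{rem:klein}, to showing that the boundary torus of the twisted $I$-bundle $K\widetilde{\times} I$ is rectangular, which the paper does by quoting Wolf's normal form for the three-dimensional Bieberbach group $\langle T_x,T_y,R_x\rangle$, while you derive the equivalent two-dimensional fact that the orientation double cover of a flat Klein bottle is a rectangular torus (the lattice $\langle g^2,b\rangle$ spanned by the square of the glide and the orthogonal translation). One minor wording point: the change of coordinates placing the glide axis along the $x$-axis should be a Euclidean isometry rather than a general affine map, since an affine change would not preserve the similarity class of the torus; with that adjustment your crystallographic verification is exactly the computation the paper performs one dimension up.
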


\begin{proof}
Suppose again $M=\partial W$, and let $C\cong F\times_f(0,+\infty)$ be a boundary cusp of $W$ with connected boundary. By
Remark \ref{rem:klein}, $F$ is diffeomorphic to $K\widetilde{\times} I$ and $\partial F$ is a flat torus. We now show that $\partial F$ is rectangular.
To that end,  we have
$$F\cong\left(\matR^2\times[-a,a]\right)/\Gamma,$$
where we can assume that the group $\Gamma<\Iso(\matR^2\times[-a,a])<\Iso(\matR^3)$ is generated by a parallel translation $T_x$ along $(2b,0,0)$, a translation $T_y$ along $(0,c,0)$, and a roto-translation $R_x\colon(x,y,z)\mapsto(x+b,-y,-z)$ (c.f. for instance \cite[Theorem 3.5.5, item 2]{W}). In particular, we have
$$\partial F\cong\left(\matR^2\times\{a\}\right)/{\langle T_x,T_y\rangle},$$
and the lattice $\langle T_x,T_y\rangle$ is generated by two vectors spanning a rectangle.
\end{proof}
We can now prove Theorems \ref{thm1} and \ref{thm2}.

\begin{proof}[Proof of Theorem~\ref{thm1}:]
Let $\mathbb H^3/\Gamma$ be the figure-eight's sibling manifold. Up to conjugation, $\Gamma$ is an index $12$ subgroup in $PSL_2(\mathcal{O}_3)$, c.f. \cite[\S 13.7.1(vi)]{MaRe}. Thus $\Gamma$ can be embedded in $SO(q, \mathbb{Z})$ for a quadratic form of signature $(3, 1)$, c.f. \cite[\S 3]{Chu} and \cite{EGM, JM}. 
The argument given in \cite[\S 9.1]{KRS} now applies to show that the sibling manifold embeds geodesically. 

However, from \cite{Wie}, one sees that the modulus of the cusp is $\omega = \frac{-1+\sqrt{-3}}{2}$, and hence is not rectangular.
Thus, by Proposition~\ref{prop:cusp_bounding_3d}, the figure-eight knot's sibling manifold does not bound geometrically. 
\end{proof}

\begin{rem}
\label{sisiter_snappy} The cusp shape of the figure-eight knot sibling can also be computed using SnapPy \cite{SnapPy}.
By setting $\texttt{M = Manifold('m003')}$, where $\texttt{'m003'}$ is the entry for the figure-eight knot's sibling manifold in the Callahan-Hildebrand-Weeks census \cite{CHW}, and issuing the command $\texttt{M.cusp\_info(0).modulus}$, one sees that the cusp section of the sibling manifold is not rectangular (a numerical estimate suffices). \end{rem}

\begin{proof}[Proof of Theorem~\ref{thm2}:]
Let $T$ be the cusp section of a single-cusped hyperbolic $3$-manifold $M = \mathbb{H}^3/\Gamma$ whose invariant trace field has odd degree. We shall show that the odd degree assumption precludes $T$ from being rectangular. 
Thus, assume to the contrary that $T$ is rectangular. Let $K$ be the trace-field of $\Gamma$, and $k$ its invariant trace field, i.e. the trace field of the group $\Gamma^{(2)} = \langle g^2 \,\, | \,\, g \in \Gamma \rangle$.

By \cite[Theorem 4.2.3]{MaRe} we may assume that, up to conjugation, $\Gamma \subset PSL_2(K)$, and moreover $\pi_1(T) = \langle a, b \rangle$, with 
\begin{equation*}
a = \left( \begin{array}{cc}
1& 1\\
0& 1
\end{array}  \right),\,\,\, b = \left( \begin{array}{cc}
1& s\\
0& 1
\end{array}\right)
\end{equation*}
while there exists an element $x\in \Gamma$ such that
\begin{equation*}
x = \left( \begin{array}{cc}
1& 0\\
t& 1
\end{array}\right).
\end{equation*}
\noindent The complex number $s$ above is the modulus parameter for the torus $T$.

Since $a^2, b^2, x^2 \in \Gamma^{(2)}$, it follows that $\mathrm{tr}(a^2x^2)\in k$, hence $t \in k$. Furthermore, $\mathrm{tr}(b^2x^2) = 2+4st \in k$. Thus the cusp parameter $s$ of $M$ belongs to $k$, which implies that $s$ has odd degree over $\mathbb{Q}$. Hence
$F = \mathbb{Q}(s) \subset  k$ is a sub-field of $k$, having odd degree. 
By Proposition~\ref{prop:cusp_bounding_3d}, $T$ is a rectangular torus, and thus its modulus belongs to the imaginary axis $i \mathbb{R}$.  
However,  if $s = i r$, for some $r \in \mathbb{R}$, then $F$ is preserved by complex conjugation, implying that the real sub-field $F\cap \mathbb{R}$ has degree $2$ in $F$, 
contradicting the fact that the degree of $F$ is odd. This completes the proof.
\end{proof}

\subsection{Minimal volume hyperbolic 4-manifolds}\label{RT_notbound}
The Ratcliffe-Tschantz census \cite{RT} contains most of the known cusped hyperbolic $4$-manifolds of minimal volume. All of these manifolds are arithmetic. In particular, by \cite{KRS} they all embed geodesically.

By \cite[Table 2]{RT}, each of the $22$ Ratcliffe-Tschantz orientable $4$-manifolds has an odd number of cusps with section diffeomorphic to the so-called \emph{Hantzsche-Wendt manifold} (denoted by F in \cite{RT} and by $\mathcal G_6$ in \cite{W}). This flat $3$-manifold has no fixed-point-free orientation-reversing self-homeomorphism (c.f. \cite[Theorem 3.5.9]{W} and also \cite{Z}). Thus, by Proposition \ref{prop:cusp_bounding}, none of the manifolds from \cite[Table 2]{RT} bounds geometrically.\footnote{One can also arrive at this conclusion by the Euler characteristic argument mentioned in Section \ref{intro}.}

\section{Embedding the figure-eight's sibling} \label{sec:sibling}

Although Theorem \ref{thm1} shows that the figure eight knot sibling embeds in a hyperbolic $4$-manifold, it gives little control on the topology of the latter. The purpose of this section is to prove Theorem~\ref{thm4} using an approach due to Slavich \cite{S2}, which will afford additional control. We start with some necessary definitions.

\begin{defn}
A $4$-dimensional \emph{triangulation} $\mathcal{T}$ is a pair $(\{\Delta_i\}^{2k}_{i=1}, \{g_j\}^{5k}_{j=1})$, where $k$ is a positive integer, the $\Delta_i$'s are copies of the standard $4$-dimensional simplex, and the $g_j$'s are simplicial pairings between all the $10k$ facets of the $\Delta_i$'s. 
\end{defn}

\begin{defn}
A triangulation $\mathcal{T}$ is \emph{orientable} if it is possible to choose an orientation for each $\Delta_i$ so that all the $g_j$'s are orientation-reversing (c.f. also \cite[Definition 4.2]{KS}).
\end{defn}

\begin{defn}
A $4$-dimensional triangulation $\mathcal{T}$ is \emph{$6$-valent} if all cycles of $2$-faces in $\mathcal{T}$ have length exactly $6$.
\end{defn}

With each cycle $c$ of $2$-faces in $\mathcal{T}$ there is a naturally associated return map $r_c$ from a $2$-simplex to itself. In order to obtain it, one has to follow the simplicial pairings from one 4-simplex to the next one, until the cycle closes up. 

Our proof will make essential use of the fact stated below.

\begin{prop}[Proposition 3.9 in \cite{S2}]\label{prop:embedding}
Let $M$ be a hyperbolic $3$-manifold obtained by glueing the sides of some copies of the regular ideal hyperbolic tetrahedron via isometries.
If this glueing can be realised as the link of a vertex in a $6$-valent orientable $4$-dimensional triangulation $\mathcal{T}$ with trivial return maps, then $M$ embeds geodesically. Moreover,
$$\evol(M)\leq\frac{4\pi^2}{3}\cdot\frac{2k}3,$$
where $2k$ is the number of $4$-simplices in $\calT$ .
\end{prop}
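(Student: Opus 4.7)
My plan is to realise $\mathcal{T}$ geometrically as a finite-volume hyperbolic $4$-manifold $N$ using regular ideal hyperbolic $4$-simplices, and then to identify $M$ with a totally geodesic $3$-submanifold of $N$.

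First, I would endow each abstract $4$-simplex $\Delta_i$ with the geometry of a regular ideal hyperbolic $4$-simplex in $\mathbb{H}^4$, whose $3$-faces are themselves regular ideal hyperbolic tetrahedra. The facet pairings $g_j$ are realised by orientation-reversing hyperbolic isometries between the corresponding $3$-faces, which exist because the isometry group of the regular ideal hyperbolic tetrahedron acts transitively on ordered ideal-vertex tuples. The glued space $N=\bigsqcup_i\Delta_i/\sim$ then has to be verified to be a smooth hyperbolic $4$-manifold. Smoothness along $3$-faces is automatic from the isometric pairings, so the local check reduces to the $2$-faces. The $6$-valence hypothesis makes the total dihedral angle around each $2$-face equal $2\pi$, while the triviality of the return maps excludes any holonomy twist around $2$-faces; orientability of $\mathcal{T}$ lifts to orientability of $N$. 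Each ideal vertex of $\mathcal{T}$ then produces a cusp of $N$ with Euclidean horospherical cross-section.

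For the geodesic embedding, I would focus on the vertex $v$ whose combinatorial link realises the gluing of $M$. For each $4$-simplex $\Delta_i\ni v$, the $3$-face $T_i$ opposite $v$ is a totally geodesic regular ideal hyperbolic tetrahedron sitting inside $\Delta_i$. The facet pairings of $\mathcal{T}$ restrict to isometric identifications of the $T_i$'s along their $2$-faces, exactly as prescribed by $L(v)\cong M$. This realises $M$ as a piecewise totally geodesic subset $M'\subset N$, isometric to $M$ as an abstract hyperbolic $3$-manifold. The key point is to verify that $M'$ is globally (and not merely piecewise) totally geodesic in $N$, that is, that the tangent $3$-planes of adjacent $T_i, T_j$ actually agree along their common $2$-face. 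Here the trivial-return-map hypothesis is essential: it should guarantee that the pairings along $3$-faces through $v$ assemble coherently, so that the union of the $T_i$'s fits together locally as a single totally geodesic copy of $\mathbb{H}^3\subset\mathbb{H}^4$ at every identification $2$-face, and globally as an embedded (rather than self-intersecting) hypersurface.

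The volume bound then follows arithmetically: the volume of the regular ideal hyperbolic $4$-simplex equals $\frac{4\pi^2}{9}$, computable from Schl\"afli's formula (or from a Gauss--Bonnet calculation on an associated orbifold quotient). Summing over the $2k$ pieces gives $\mathrm{vol}(N)=\frac{4\pi^2}{3}\cdot\frac{2k}{3}$, bounding $\evol(M)$ from above as claimed. The hardest step is the coherence check in the previous paragraph: a direct local computation at a common $2$-face requires a delicate analysis of the dihedral geometry of the regular ideal hyperbolic $4$-simplex and of how the pairings combine around $v$, and this is precisely where the trivial-return-map hypothesis does its essential work.
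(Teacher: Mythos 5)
Your overall strategy fails at its very first step, and the failure is quantitative, not a matter of a missing verification. The regular ideal hyperbolic $4$-simplex has dihedral angle $\arccos(1/3)\approx 70.5^\circ$ along each of its $2$-faces (the link of an ideal vertex is a regular Euclidean tetrahedron, and its edge dihedral angle is $\arccos(1/3)$, not $\pi/3$ as in one dimension lower). Since $\arccos(1/3)$ is not $2\pi/6$, a $6$-valent gluing of regular ideal $4$-simplices produces total angle $6\arccos(1/3)\neq 2\pi$ around each $2$-face, so the quotient space $N$ is not a hyperbolic manifold, no matter how trivial the return maps are. The same angle defeats your second step independently: the two facets of a single $4$-simplex adjacent along a $2$-face meet at $\arccos(1/3)$, so the facets $T_i$, $T_j$ opposite $v$ in adjacent $4$-simplices would meet along their common $2$-face at angle $2\arccos(1/3)\neq\pi$ on the side of the shared facet $G$, and their union can never be smoothly totally geodesic. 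Finally, the volume figure $\frac{4\pi^2}{9}$ for the regular ideal $4$-simplex is unsupported (Gauss--Bonnet constrains volumes of complete $4$-manifolds, not of individual simplices) and is in fact far too large; your arithmetic lands on the right bound only by coincidence.

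The paper's (i.e.\ Slavich's) construction is designed precisely to circumvent this: each $4$-simplex of $\mathcal{T}$ is replaced by an \emph{ideal hyperbolic rectified $5$-cell}, a polytope with five regular ideal octahedral facets (one for each facet of the simplex, paired according to the $g_j$'s, meeting each other at dihedral angle exactly $\pi/3$, so that $6$-valence with trivial return maps yields angle $2\pi$ around each $2$-face) and five regular ideal tetrahedral facets (one for each vertex of the simplex, left unglued). The tetrahedral facets assemble into the totally geodesic boundary of a hyperbolic $4$-manifold $W$, whose boundary components are exactly the vertex links of $\mathcal{T}$ realised as gluings of regular ideal tetrahedra; $M$ is one of these components and embeds geodesically in the double $DW$. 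The volume count is $\mathrm{vol}(DW)=2\cdot 2k\cdot\frac{2\pi^2}{9}=\frac{4\pi^2}{3}\cdot\frac{2k}{3}$, using the value $\frac{2\pi^2}{9}$ for the ideal rectified $5$-cell computed in \cite{KS}. If you want to salvage your write-up, the fix is not a more delicate local analysis of the regular ideal $4$-simplex but the replacement of that simplex by the rectified one.
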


\begin{proof}[Sketch of proof]
By replacing each $4$-simplex of $\mathcal{T}$ with an ideal hyperbolic rectified $5$-cell, one gets a hyperbolic $4$-manifold $W$ with totally geodesic boundary $\partial W$ tessellated by regular ideal tetrahedra. The link of each vertex of $\mathcal T$ gives the tessellation into tetrahedra of a boundary component of $W$. The manifold $M$ embeds geodesically in the double of $W$. Finally, the volume of the ideal rectified 4-simplex is $\frac{2\pi^2}{9}$ \cite{KS}. 
\end{proof}

We are ready to prove Theorem~\ref{thm4}.

\begin{proof}[Proof of Theorem~\ref{thm4}:]
We will adopt the usual ideal triangulations of the figure-eight knot complement and its sibling manifold by regular ideal hyperbolic tetrahedra. Each of them consists of two such tetrahedra, $A$ and $B$, with the following glueing maps between their $2$-faces.

For the figure-eight knot complement, depicted in Figure~\ref{fig:triangulations}--(i), we set:
\begin{equation}\label{eq:glueing-8-1}
\begin{array}{ccc}
A& \hspace*{0.25in}& B\\
(1,2,3)&  \leftrightarrow& (3,2,1)\\
(1,2,4)&  \leftrightarrow& (1,4,2)\\
(1,3,4)&  \leftrightarrow& (3,4,2)\\
(2,3,4)& \leftrightarrow& (4,1,3),
\end{array}
\end{equation}
while for the figure-eight sibling manifold, depicted in Figure~\ref{fig:triangulations}--(ii), we set:
\begin{equation}\label{eq:glueing-8-2}
\begin{array}{ccc}
A& \hspace*{0.25in}& B\\
(1,2,3)&  \leftrightarrow& (4,1,2)\\
(1,2,4)&  \leftrightarrow& (3,4,1)\\
(1,3,4)&  \leftrightarrow& (1,3,2)\\
(2,3,4)& \leftrightarrow& (2,4,3).
\end{array}
\end{equation}

\begin{figure}[h]
  \centering
  \begin{tabular}{@{}c@{}}
 \small (i)    
    \includegraphics[scale = .27]{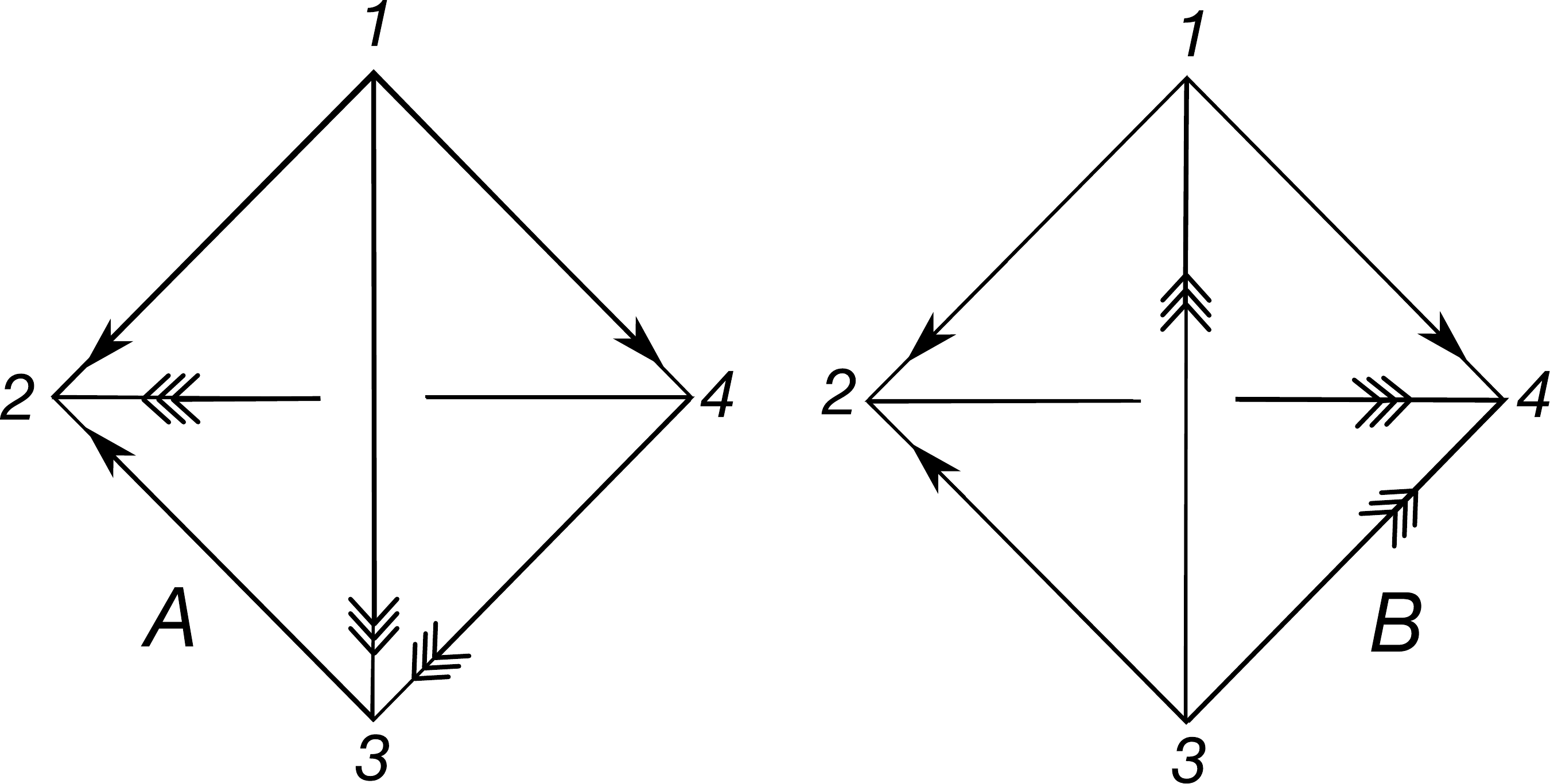} 
  \end{tabular}

  \begin{tabular}{@{}c@{}}
  \small (ii)
    \includegraphics[scale = .27]{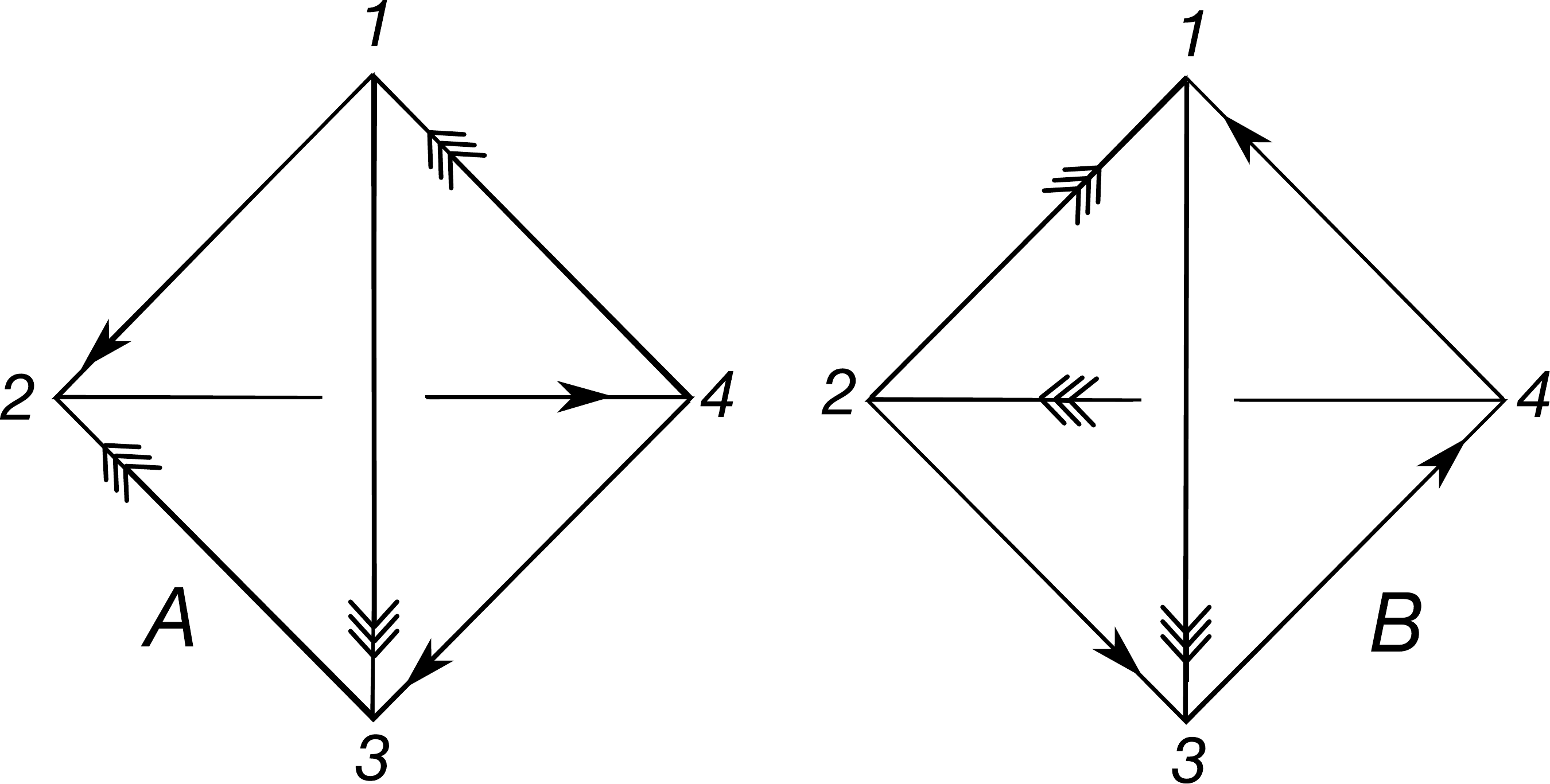} 
  \end{tabular}

  \caption{\footnotesize Ideal triangulations: (i) figure-eight knot complement; (ii) figure-eight sibling manifold.}\label{fig:triangulations}
\end{figure}

Let $Y$ be the cone over the 3-dimensional triangulation in Figure~\ref{fig:triangulations}--(i), and $X,Z$ be two copies of the cone over the triangulation in Figure~\ref{fig:triangulations}--(ii).

This means that each of $X$, $Y$ and $Z$ consists of two $4$-simplices $A^\prime$ an $B^\prime$ whose facets are identified as follows:
\begin{equation}\label{eq:glueing-2}
Y:  \left\lbrace \begin{array}{ccc}
A^\prime& \hspace*{0.25in}& B^\prime\\
(1,2,3,5)&  \leftrightarrow& (3,2,1,5)\\
(1,2,4,5)&  \leftrightarrow& (1,4,2,5)\\
(1,3,4,5)&  \leftrightarrow& (3,4,2,5)\\
(2,3,4,5)& \leftrightarrow& (4,1,3,5),
\end{array} \right.
\end{equation}
and
\begin{equation}\label{eq:glueing-1}
X, Z:  \left\lbrace \begin{array}{ccc}
A^\prime& \hspace*{0.25in}& B^\prime\\
(1,2,3,5)&  \leftrightarrow& (4,1,2,5)\\
(1,2,4,5)&  \leftrightarrow& (3,4,1,5)\\
(1,3,4,5)&  \leftrightarrow& (1,3,2,5)\\
(2,3,4,5)& \leftrightarrow& (2,4,3,5).
\end{array} \right.
\end{equation}

Observe that each of $X$, $Y$ and $Z$ has two remaining facets $A$ and $B$ with vertices $\{1,2,3,4 \}$ unidentified.
We shall build a 4-dimensional triangulation $\mathcal T$ by pairing these free facets of $X$, $Y$ and $Z$ as depicted in Figure~\ref{fig:diagram}.
\begin{figure}[h]
\includegraphics[scale=0.3]{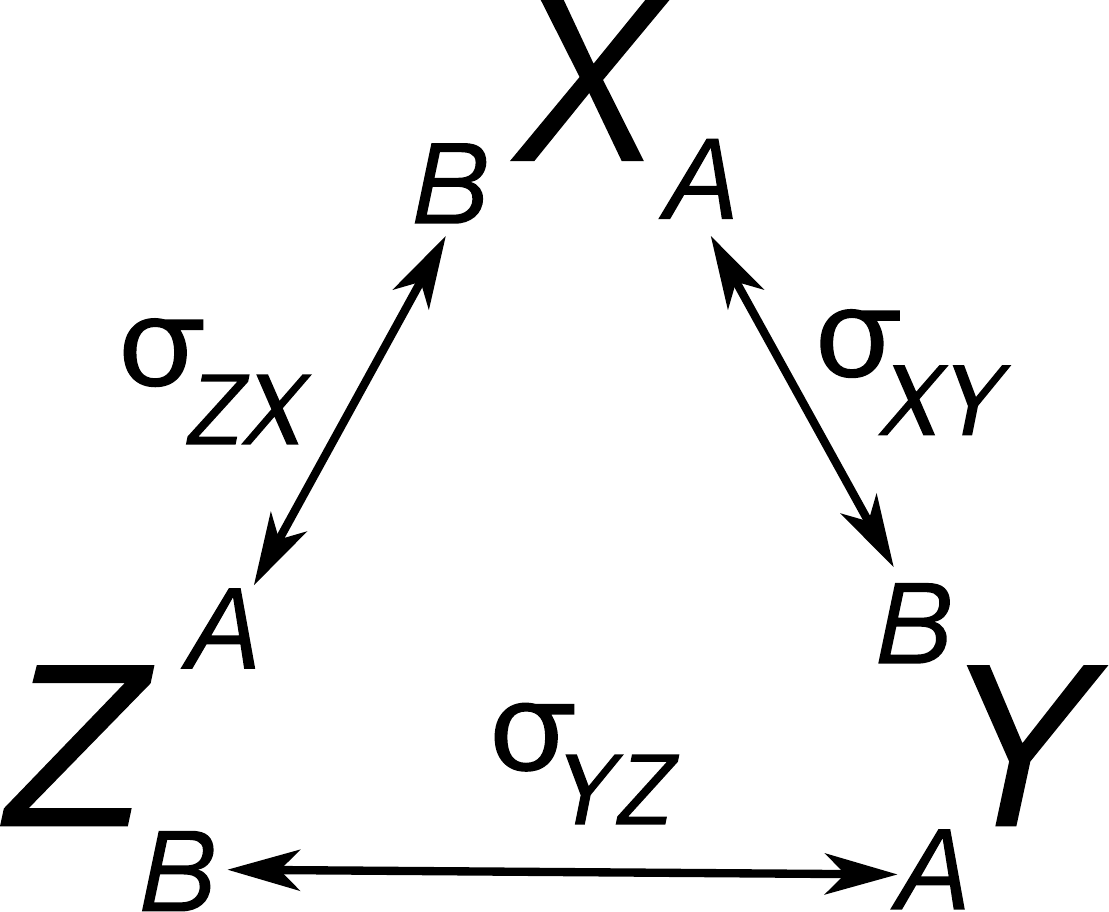}
\caption{\footnotesize Identifying the free facets of $X$, $Y$, and $Z$.}
\label{fig:diagram}
\end{figure}
The map $\sigma_{XY}$ will be used in order to identify facet $A$ of $X$ to $B$ of $Y$, and an analogous notation $\sigma_{YZ}$ and $\sigma_{ZX}$ is adopted for the remaining maps. We set:
\begin{equation}\label{eq:glueing-3}
\begin{array}{c}
\sigma_{XY}: (1,2,3,4) \rightarrow (3,1,4,2)\\
\sigma_{YZ}: (1,2,3,4) \rightarrow (3,4,2,1)\\
\sigma_{ZX}: (1,2,3,4) \rightarrow (2,4,1,3).
\end{array}
\end{equation}

Now we check that the resulting $4$-dimensional triangulation $\mathcal T$ satisfies the conditions of Proposition~\ref{prop:embedding}. First of all, $\calT$ is orientable because \eqref{eq:glueing-2}, \eqref{eq:glueing-1} are cones over orientable triangulations, and the pairing maps in \eqref{eq:glueing-3} are identified with odd permutations in the symmetric group $\mathfrak S_4$. Second, the condition on the cycles of $2$-faces should be satisfied.

By using the glueing equations \eqref{eq:glueing-2}, \eqref{eq:glueing-1} and \eqref{eq:glueing-3}, together with the diagram in Figure~\ref{fig:diagram}, we can compute the cycles of 2-faces with no vertex labelled 5:
\begin{eqnarray*}
X_A: (1,2,3) \rightarrow Y_B: (3,1,4) \rightarrow Y_A: (4,3,2) \rightarrow Z_B: (1,2,4) \rightarrow \\
\rightarrow Z_A: (2,3,1) \rightarrow X_B: (4,1,2) \rightarrow X_A: (1,2,3),
\end{eqnarray*}
\begin{eqnarray*}
X_A: (1,2,4) \rightarrow Y_B: (3,1,2) \rightarrow Y_A: (1,3,2) \rightarrow Z_B: (3,2,4) \rightarrow \\
\rightarrow Z_A: (4,2,3) \rightarrow X_B: (3,4,1) \rightarrow X_A: (1,2,4),
\end{eqnarray*}
\begin{eqnarray*}
X_A: (1,3,4) \rightarrow Y_B: (3,4,2) \rightarrow Y_A: (1,3,4) \rightarrow Z_B: (3,2,1) \rightarrow \\
\rightarrow Z_A: (3,4,1) \rightarrow X_B: (1,3,2) \rightarrow X_A: (1,3,4),
\end{eqnarray*}
\begin{eqnarray*}
X_A: (2,3,4) \rightarrow Y_B: (1,4,2) \rightarrow Y_A: (1,2,4) \rightarrow Z_B: (3,4,1) \rightarrow \\
\rightarrow Z_A: (1,2,4) \rightarrow X_B: (2,4,3) \rightarrow X_A: (2,3,4).
\end{eqnarray*}
All such cycles have length $6$ and trivial return maps.
The same conclusion holds for the cycles of $2$-faces containing vertex $5$, since they correspond to the glueing of edges of simplices $A$ and $B$ in the manifold triangulations from Figure~\ref{fig:triangulations}.

By using Regina \cite{Regina} we can conclude that $\mathcal{T}$ has $4$ vertices.\footnote{A word of caution to the reader: Regina does not recognise $\mathcal{T}$ as a valid triangulation, since it does not allow reverse identifications of edges. However, $\mathcal{T}$ does not have to satisfy this condition. The links of vertices are valid triangulations for Regina, as it should be.} Two of their links are isomorphic to the sibling manifold triangulation, one to the figure-eight triangulation, and the remaining fourth link is isomorphic to the triangulation of the manifold $O = \mathtt{otet24\_00260}$ from the census of tetrahedral manifolds \cite{census}. Thus, as described in the proof of Proposition \ref{prop:embedding}, we have
$$\partial W\cong K\sqcup L\sqcup L\sqcup O,$$
where $K$ is the figure-eight knot complement, and $L$ is its sibling manifold.

The figure-eight knot complement $K$ is the orientation double-cover of the non-orientable Gieseking manifold, while $O$ is the orientation double-cover of the non-orientable manifold $\mathtt{ntet12\_00019}$ from \cite{census} (as one can verify by SnapPy).
Thus, we can quotient the $O$ and $K$ boundary components of $W$, in order to obtain a hyperbolic manifold $W'$ with two boundary components, each isometric to $L$. By identifying the $L$ boundary components of $W'$, we obtain a hyperbolic $4$-manifold $N$ of volume $\frac{4 \pi^2}{3}$, in which the sibling manifold $L$ embeds geodesically.

Similarly, Slavich produced a hyperbolic 4-manifold $W''$ with totally geodesic boundary
$$\partial W''\cong K\sqcup K\sqcup K\sqcup O'$$
with $\mathrm{vol}(W'')=\frac{4\pi^2}{3}$, where $O'$ is another tetrahedral 3-manifold with an orientation-reversing fixed-point-free involution \cite[Remark 4.4]{S2}. To conclude the proof for the figure-eight knot complement, we glue together two $K$-components of $\partial W''$ via an isometry, and quotient the remaining boundary components as before.

\end{proof}

\end{document}